\documentclass[12pt]{article}

\usepackage{amsfonts}
\usepackage{graphics}
\usepackage{amssymb}

\newtheorem{theorem}{Theorem}[section]
\newtheorem{lemma}{Lemma}[section]
\newtheorem{definition}{Definition}[section]

\newtheorem{remark}{Remark}[section]




\linespread{1.3}

\begin{document}

\begin{center}
{\Large   $L^{p}-$solutions of the stochastic transport equation }

\end{center}

\vspace{0.3cm}

\begin{center}

{\large Pedro Catuogno\footnote{Research partially supported by CNPQ 302704/2008-6.} and
Christian Olivera.}\\

\textit{Departamento de
 Matem\'{a}tica, Universidade Estadual de Campinas, \\ F. 54(19) 3521-5921 � Fax 54(19)
3521-6094\\ 13.081-970 -
 Campinas - SP, Brazil. e-mail:  colivera@ime.unicamp.br}
\end{center}

\vspace{0.3cm}
\begin{center}
\begin{abstract}
We consider the stochastic transport linear equation and we prove existence and uniqueness
of weak  $L^{p}-$solutions.   Moreover, we obtain a representation of the general solution
and a  Wong-Zakai principle for this equation. We make only minimal assumptions, similar to the deterministic problem.  The proof is supported on
 the generalized It\^o-Ventzel-Kunita formula (see \cite{Ku2}) and the theory of Lions-DiPerna on transport linear equation (see \cite{DL}).
\end{abstract}
\end{center}

\noindent {\bf Key words:}  Stochastic perturbation, Transport
equation,  It\^o formula.

\vspace{0.3cm} \noindent {\bf MSC2000 subject classification:} 60H10
, 60H15  .

\section {Introduction}

In this article we establish global existence and uniqueness of
solution of the transport linear equation with a stochastic perturbation. Namely, we consider the following equation:

\begin{equation}\label{trasport}
 \left \{
\begin{array}{lll}
    \frac{d}{dt}u(t, x) + b(t, x) \nabla u(t, x) + \nabla u(t,x) \frac{d B_{t}}{dt}=0,\\
 u(0, x) = u_{0}(x)\in L^{p}(\mathbb{R}^{d}),
\end{array}
\right .
\end{equation}

\noindent where $B_{t} = (B_{t}^{1},...,B _{t}^{d} )$ is a
standard Brownian motion in $\mathbb{R}^{d}$ and  the stochastic
integration is taken in the Stratonovich sense.

This equation has been treated  for the case
 $u_{0}(x)\in L^{\infty}(\mathbb{R}^{d})$ (see \cite{FGP2} and \cite{Ku})
via the stochastic characteristic method. Our aim here is to prove the
existence, uniqueness and regularity when the initial data $u_{0}(x)\in L^p(\mathbb{R}^{d})$ for $p \in [1,\infty)$.
Some partial results are presented in \cite{BrisLion}, where the case $u_{0}(x)\in L^1(\mathbb{R}^{d}) \cap L^{\infty}(\mathbb{R}^{d})$ was studied.

The theory of renormalized solutions of the linear transport equation was introduced by DiPerna and Lions in a celebrated paper \cite{DL}.
They deduced the existence, uniqueness and stability results for ordinary differential equations with rough coefficients from corresponding results on the
associated linear transport equation. Similar results were obtained in \cite{CruCi} by
taking the standard Gaussian measure as the reference measure. Ambrosio \cite{ambrisio} generalized the
results to the case where the coefficients have only bounded variation regularity by considering the continuity equation.
These results have recently been generalized into different settings, \cite{AF} and \cite{FangLuo} for infinite dimensional spaces,
\cite{Figa}  and \cite{Zanng} for generalizations to transport-diffusion
equations and its associated stochastic differential equations.

 We prove existence and uniqueness of  weak  $L^{p}-$solution using
the generalized It\^o-Ventzel-Kunita formula (see Theorem 8.3 of  \cite{Ku2})
and the results on existence and uniqueness for the deterministic  transport linear equation
(see for example \cite{DL} and \cite{BrisLion}). We give a Wong-Zakai principle for the
stochastic transport equation (\ref{trasport}), this principle is
proved via stability properties of the deterministic transport linear equation. We would like to mention that our approach clearly
differs from that one in \cite{FGP2}, however this article has been a source of inspiration for us.

The plan of exposition is as follows: In section 2  we prove existence of
 weak  $L^{p}-$solutions and we point some extensions.
In section 3, we show a uniqueness theorem for weak  $L^{p}-$solutions. Finally, in section 4,
we establish a Wong-Zakai principle for the SPDE (\ref{trasport}).

Through of this paper we fix a stochastic basis with a
$d$-dimensional Brownian motion $(\Omega, \mathcal{F}, \{
\mathcal{F}_t: t \in [0,T] \}, \mathbb{P}, (B_{t}))$.

\section{Stochastic transport equation. Existence of weak solutions}

\begin{definition}\label{defisolu}
 A   weak  $L^{p}-$solution of the  Cauchy problem (\ref{trasport}) is a stochastic process
    $u\in L�^{\infty}( �\Omega\times[0, T],�L^{p}(\mathbb{R}^{d}))$
such that, for every test function�
  $\varphi \in C_{0}^{\infty}(\mathbb{R}^{d})$, the process $\int u(t,
  x)\varphi(x)
  dx$ has a continuous modification which is a
$\mathcal{F}_{t}$-semimartingale and satisfies

\[
\int u(t,x) \varphi(x) dx= \int u_{0}(x) \varphi(x) \ dx
\]
\[
+\int_{0}^{t} \int b(s,x) \nabla \varphi(x) u(s,x) \ dx ds
+ \int_{0}^{t} \int div \ b(s,x) \varphi(x) u(s,x) \ dx ds \
\]
\[
+\sum_{i=0}^{d} \int_{0}^{t} \int  D_{i}\varphi(x) u(s,x) \ dx \circ
dB_{s}^{i}
\]
\end{definition}
\noindent We shall always assume that

\begin{equation}\label{con1}
b\in L^{1}([0,T], (L_{loc}^{1}(\mathbb{R}^{d}))^d)
\end{equation}

\noindent We observe that this definition makes sense if we assume

\begin{equation}\label{con2}
b\in L^{1}([0,T], (L_{loc}^{q}(\mathbb{R}^{d}))^d)
\end{equation}

\noindent where $q$ is the conjugate exponent of p.

\begin{lemma}\label{lemaexis} Let $p\in[1,\infty),  u_{0}\in
L^{p}(\mathbb{R}^{d})$. Assume (\ref{con1}), (\ref{con2}) and that

\begin{equation}\label{cond3}
div \ b\in L^{1}([0,T], L^{\infty}(\mathbb{R}^{d}))
\end{equation}

\noindent  Then there exits a weak $L^{p}-$solution $u$ of the SPDE
(\ref{trasport}).
\end{lemma}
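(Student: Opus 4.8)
The strategy is to construct the solution by combining a change of variables that removes the Brownian term with the classical DiPerna–Lions existence theory for the deterministic transport equation. First I would define the random field $v(t,x) = u(t, x + B_t)$; formally, if $u$ solves (\ref{trasport}) in the Stratonovich sense, then the generalized Itô–Ventzel–Kunita formula (Theorem 8.3 of \cite{Ku2}) shows that the stochastic terms cancel and $v$ satisfies, $\mathbb{P}$-a.s., the deterministic transport equation
\[
\frac{d}{dt} v(t,x) + b(t, x + B_t(\omega)) \, \nabla v(t,x) = 0, \qquad v(0,x) = u_0(x).
\]
So the plan is to run this correspondence in reverse: fix $\omega$, apply the DiPerna–Lions theory to the \emph{random} vector field $\tilde b^\omega(t,x) := b(t, x + B_t(\omega))$ to obtain a weak $L^p$-solution $v^\omega$, and then set $u(t,x) := v^\omega(t, x - B_t(\omega))$.

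The key steps, in order, are: (1) check that $\tilde b^\omega$ inherits the hypotheses (\ref{con1}), (\ref{con2}) and (\ref{cond3}) for a.e.\ $\omega$ — this is immediate since translation preserves $L^q_{loc}$ and $L^\infty$ norms and $\operatorname{div} \tilde b^\omega(t,x) = (\operatorname{div} b)(t, x + B_t)$, so $\|\operatorname{div}\tilde b^\omega(t,\cdot)\|_{L^\infty} = \|\operatorname{div} b(t,\cdot)\|_{L^\infty}$; (2) invoke the DiPerna–Lions existence result to get, for a.e.\ $\omega$, a weak solution $v^\omega \in L^\infty([0,T], L^p(\mathbb{R}^d))$ with the explicit $L^p$-bound $\|v^\omega(t,\cdot)\|_{L^p} \le \|u_0\|_{L^p} \exp\big(\tfrac{1}{q}\int_0^t \|\operatorname{div} b(s,\cdot)\|_{L^\infty} ds\big)$, which is deterministic and hence gives $u \in L^\infty(\Omega \times [0,T], L^p(\mathbb{R}^d))$; (3) verify the measurability of $\omega \mapsto v^\omega$ (e.g.\ through the smoothed/approximated solutions, which depend measurably on $\omega$, and passing to the limit), so that $u$ is a genuine stochastic process adapted to $\mathcal{F}_t$; (4) translate the deterministic weak formulation for $v^\omega$ back into the weak formulation of Definition \ref{defisolu} for $u$, which is the place where the It\^o–Ventzel–Kunita formula is used rigorously: applying it to $v^\omega(t, \cdot)$ composed with the flow $x \mapsto x - B_t$, tested against $\varphi \in C_0^\infty$, produces exactly the drift terms $\int b\nabla\varphi\, u + \int \operatorname{div} b\, \varphi\, u$ together with the Stratonovich term $\sum_i \int D_i \varphi\, u \circ dB^i_s$.

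The main obstacle is step (4) together with the measurability/adaptedness in step (3): one must justify that the pointwise-in-$\omega$ deterministic solution, built by an abstract theorem with no control on $\omega$-dependence, actually assembles into a jointly measurable, adapted process for which the stochastic (Stratonovich) integral in Definition \ref{defisolu} makes sense, and that the change of variables $v \leftrightarrow u$ is valid at the level of the weak formulations rather than merely formally. I would handle this by first proving everything for mollified data $b^\varepsilon, u_0^\varepsilon$ — where the flow of $\tilde b^\varepsilon$ is smooth, the correspondence is classical, and measurability in $\omega$ is transparent — obtaining classical solutions $u^\varepsilon$ satisfying the weak identity; then pass to the limit $\varepsilon \to 0$ using the uniform $L^p$ bound (weak-$*$ compactness in $L^\infty(\Omega\times[0,T], L^p)$) and the stability of the DiPerna–Lions solutions, checking that each term in the weak formulation, including the Stratonovich integral rewritten in Itô form plus a correction, converges. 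The divergence bound (\ref{cond3}) is exactly what guarantees the uniform $L^p$ estimate needed for this compactness argument.
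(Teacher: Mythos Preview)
Your proposal is correct and follows essentially the same route as the paper: solve the auxiliary deterministic transport equation with random drift $b(t,x+B_t(\omega))$ via DiPerna--Lions, then apply the It\^o--Ventzel--Kunita formula to $F(y)=\int v(t,x)\varphi(x+y)\,dx$ to verify that $u(t,x):=v(t,x-B_t)$ satisfies the weak formulation of Definition~\ref{defisolu}. The paper's proof is more terse and does not explicitly address the measurability/adaptedness issue you raise in step~(3), nor does it pass through a mollification argument; your added care on these points is a legitimate elaboration rather than a different strategy.
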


\begin{proof} {\large Step 1} (auxiliary transport equation) We
considerer  the following auxiliary transport equation

\begin{equation}\label{Auxilia1}
 \left \{
\begin{array}{lll}
v_t + b(t,x+B_{t})  \nabla v(t,x)& = & 0  \\
v(0,x) & = & u_{0}(x), \ x \in\mathbb{R}^{d}.
\end{array}
\right.
\end{equation}

\noindent According to an easy modification of \cite{DL},
Proposition II.1 (taking only test functions defined on $\mathbb{R}^d$) there is a solution $v \in L^{\infty}([0, T]
\times \Omega,L^{p}(\mathbb{R}^{d}))$  of the  equation
(\ref{Auxilia1}) in the sense that it satisfies

\[
\int v(t,x) \varphi(x) dx= \int u_{0}(x) \varphi(x) \ dx
\]
\begin{equation}\label{Auxilia2}
+ \int_{0}^{t} \int b(s,x+B_{s}) \nabla \varphi(x) v(s,x) \ dx ds +
\int_{0}^{t} \int div \ b(s,x+ B_{s}) \varphi(x) v(s,x) \ dx ds \
\end{equation}

\noindent {\large Step 2} (  Solution via It\^o-Ventzel-Kunita
formula)

\noindent Applying the It\^o-Ventzel-Kunita formula to $F(y)=\int
u(t,x) \varphi(x+y) dx$ (see Theorem 8.3 of \cite{Ku2}) we
obtain that

\[
\int v(t,x) \varphi(x+B_{t}) dx
\]
\noindent is equal to

\[
\int u_{0}(x) \varphi(x) \ dx + \int_{0}^{t} \int b(s,x+B_{s})
\nabla \varphi(x+B_{s}) v(s,x) \ dx ds
\]
\[
+\int_{0}^{t} \int div \ b(s,x+B_{s}) \varphi(x+B_{s}) v(s,x) \ dx
ds \
\]
\[
  + \sum_{i=1}^{d}  \int_{0}^{t} \int
 v(s,x) \frac{\partial}{\partial
y_{i}}\varphi(x+B_{s}) dx \circ dB_{s}^{i}.
\]

\noindent  We note that  $\frac{\partial}{\partial y_{i}}
\varphi(x+B_{s})=\frac{\partial}{\partial x_{i}}
\varphi(x+B_{s})$.  Thus

\[
\int v(t,x) \varphi(x+B_{t}) dx= \int u_{0}(x) \varphi(x) \ dx
\]
\[
+ \int_{0}^{t} \int b(s,x+B_{s}) \nabla \varphi(x+B_{s}) v(s,x) \ dx
ds + \int_{0}^{t} \int div \ b(s,x+B_{s}) \varphi(x+B_{s}) v(s,x) \
dx ds \
\]
\begin{equation}\label{auxilia3}
   + \sum_{i=1}^{d}  \int_{0}^{t} \int
 v(s,x) D_{i}\varphi(x+B_{s}) dx \circ dB_{s}^{i}.
\end{equation}

\noindent From  the equation (\ref{auxilia3}) we follow that
$u(t,x):=v(t,x-B_{t})$ is a weak  $L^{p}-$solution
  of the SPDE (\ref{trasport}).
\end{proof}

\begin{remark}\label{coment2} We observe that the same proof is valid if
 we assume that $b(t,x,\cdot)\in \mathcal{F}_t$ for all
$(t,x) \in [0,T]\times\mathbb{R}^{d}$ and verifies
 (\ref{con2}) and (\ref{cond3}) almost surely for $\omega \in \Omega$. This results gives a
partial  answer  about the   existence of  solution for the SPDE
(\ref{trasport}) with
 stochastic coefficient (see Introduction of \cite{FGP2})
\end{remark}

\begin{remark}\label{coment2} We would to note that the same proof works for the equation

\begin{equation}\label{trasportmod}
 \left \{
\begin{array}{lll}
    \frac{d}{dt}u(t, x) + b(t, x) \nabla u(t, x) + \nabla u(t,x) \frac{d B_{t}}{dt} + c(t,x)u=f(t,x),\\
 u(0, x) = u_{0}(x)\in L^{p}(\mathbb{R}^{d}),
\end{array}
\right .
\end{equation}

 \noindent where $c$ and $f$ satisfy the conditions  of the Proposition II.1 of \cite{DL}
\end{remark}

\begin{remark} We mention some  future works
\begin{enumerate}


\item[a)] The case  that $b(t,\cdot, \omega)$  is a nonadapted
process could be studied via a Generalized
It\^o formula for nonadapted process (see by example \cite{OconePardoux}).

\item[b)] The stochastic transport equations with other noises could be studied via the stochastic calculus via regularization (see
\cite{CoveiloRuso} and \cite{{FLRU}}).

\item[c)] For initial data and coefficients more singular, a
possible approach is to study the transport equation in the sense
of generalized function algebras, see for instance
\cite{albe} and \cite{Russo}. For a new approach see
\cite{CO1} and \cite{CO2}.

\end{enumerate}

\end{remark}

\section{Uniqueness}

\noindent In this section, we shall present a uniqueness theorem
for the SPDE (\ref{trasport}) under similar  conditions to the
deterministic case (see for instance \cite{DL}
and \cite{BrisLion}).

\begin{theorem}\label{uni} Let $p \in [1,\infty)$. Assume that $  div \ b\in L^{1}([0,T], L^{\infty}(\mathbb{R}^{d}))$,
   $ b \in L^{1}([0,T], (W_{loc}^{1,q}(\mathbb{R}^{d}))^d)$ and $\frac{|b|}{1+|x|}\in L^{1}([0,T],L^{1}(\mathbb{R}^{d}))+L^{1}([0,T],L^{\infty}(\mathbb{R}^{d}))$ . Then, for every   $u_{0}\in L^{p}(\mathbb{R}^{d})$ there
exists a unique  weak $L^{p}-$solution of the Cauchy problem
(\ref{trasport}).
\end{theorem}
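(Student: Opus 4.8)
The plan is to reduce uniqueness for the SPDE to uniqueness for the deterministic transport equation (\ref{Auxilia1}) via the substitution $u(t,x) = v(t,x-B_t)$ already used in the proof of Lemma~\ref{lemaexis}. First I would take an arbitrary weak $L^p$-solution $u$ of (\ref{trasport}) and set $v(t,x) := u(t,x+B_t)$. The goal is to show that $v$ is a weak $L^p$-solution of the auxiliary (random, but $\omega$-wise deterministic) equation (\ref{Auxilia1}), i.e. that it satisfies (\ref{Auxilia2}) for every test function $\varphi\in C_0^\infty(\mathbb{R}^d)$ and almost every $\omega$. To do this I would apply the generalized It\^o-Ventzel-Kunita formula (Theorem 8.3 of \cite{Ku2}) to the semimartingale $t\mapsto \int u(t,x)\,\varphi(x-B_t)\,dx$, i.e. compose the $\mathcal{F}_t$-semimartingale $\int u(t,\cdot)$ with the random field $\varphi(\cdot - B_t)$; the Stratonovich correction coming from the $\circ\, dB^i$ term in the definition of a weak solution of (\ref{trasport}) exactly cancels the Stratonovich term produced by differentiating $\varphi(x-B_t)$, leaving precisely the two drift integrals appearing in (\ref{Auxilia2}) (with $b(s,x+B_s)$ after the change of the spatial variable). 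This is the same computation as Step~2 of Lemma~\ref{lemaexis} run in reverse, so it requires only that the integrability hypotheses (\ref{con1}), (\ref{con2}), (\ref{cond3}) hold, which are subsumed by the hypotheses of the theorem.

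The second step is the deterministic uniqueness input. For each fixed $\omega$, equation (\ref{Auxilia1}) is a linear transport equation with drift $\tilde b(t,x) := b(t,x+B_t(\omega))$. I would check that $\tilde b$ inherits the DiPerna--Lions hypotheses from those imposed on $b$: the assumption $b\in L^1([0,T],(W^{1,q}_{loc})^d)$ gives $\tilde b\in L^1([0,T],(W^{1,q}_{loc})^d)$ (translation is a bounded operator on $W^{1,q}_{loc}$ for each $t$, and $t\mapsto B_t(\omega)$ is continuous hence bounded on $[0,T]$), $\operatorname{div}\tilde b(t,x) = (\operatorname{div} b)(t,x+B_t(\omega))\in L^1([0,T],L^\infty)$, and the linear-growth-type bound $\frac{|b|}{1+|x|}\in L^1L^1 + L^1L^\infty$ transfers to $\frac{|\tilde b|}{1+|x|}$ after absorbing the shift by $B_t(\omega)$ into the $1+|x|$ denominator (using $1+|x+B_t|\le (1+\|B\|_{\infty,[0,T]})(1+|x|)$). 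Hence, by DiPerna--Lions (\cite{DL}, together with \cite{BrisLion} for the $L^p$ framework), for almost every $\omega$ the Cauchy problem (\ref{Auxilia1}) has a unique weak $L^p$-solution.

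Combining the two steps: if $u_1,u_2$ are two weak $L^p$-solutions of (\ref{trasport}) with the same initial datum $u_0$, then $v_i(t,x):=u_i(t,x+B_t)$ are, for a.e.\ $\omega$, weak $L^p$-solutions of (\ref{Auxilia1}) with the same datum $u_0$; deterministic uniqueness gives $v_1 = v_2$ in $L^\infty([0,T]\times\Omega, L^p)$, hence $u_1 = u_2$. Existence is already supplied by Lemma~\ref{lemaexis}, whose hypotheses are implied by those of the theorem (since $W^{1,q}_{loc}\subset L^q_{loc}$). I expect the main obstacle to be the rigorous justification of the It\^o-Ventzel-Kunita computation at the stated low regularity: the random field $\varphi(\cdot-B_t)$ is smooth and compactly supported so the field side is unproblematic, but one must make sure Theorem 8.3 of \cite{Ku2} applies to the semimartingale $\int u(t,\cdot)\varphi$ whose characteristics involve $b$ only in $L^1_tL^q_x$ — this is handled exactly as in the existence proof, by noting the drift and diffusion coefficients of this scalar semimartingale are well defined under (\ref{con2})–(\ref{cond3}). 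A secondary point to state carefully is the measurability in $\omega$ of the resulting $\omega$-wise identities, so that "for a.e.\ $\omega$" statements can be assembled into the required $L^\infty(\Omega\times[0,T],L^p)$ equality; this is routine given the continuous semimartingale modifications posited in Definition~\ref{defisolu}.
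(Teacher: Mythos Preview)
Your proposal is correct and follows the same strategy as the paper: apply the It\^o--Ventzel--Kunita formula to $\int u(t,x)\varphi(x-B_t)\,dx$ so that $V(t,x)=u(t,x+B_t)$ solves the auxiliary deterministic transport equation $\omega$-wise, then use DiPerna--Lions uniqueness for that equation. The only cosmetic difference is that the paper re-runs the DiPerna--Lions argument in place (commuting lemma, renormalization with $\beta_M(t)=(|t|\wedge M)^p$, then Gronwall after taking expectation) rather than invoking the deterministic uniqueness theorem as a black box as you do.
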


\begin{proof} By linearity we have to show that a weak
$L^{p}-$solution with initial condition $u_{0}(x)=0$ vanishes
identically.  Applying the It\^o-Ventzel-Kunita formula (see Theorem
8.3 of \cite{Ku2} ) to $F(y)=\int u(t,x) \varphi(x-y) \ dx $, we obtain that

\[
\int u(t,x) \varphi(x-B_{t}) dx
\]

\noindent is equal to

\[
\int_{0}^{t} \int b(s,x) \nabla \varphi(x-B_{s}) u(s,x) \ dx
ds + \int_{0}^{t} \int div \ b(s,x) \varphi(x-B_{s}) u(s,x) \
dx ds \
\]
\[
 + \sum_{i=1}^{d}  \int_{0}^{t} \int
 u(s,x) D_{i}\varphi(x-B_{s}) dx \circ dB_{s}^{i}.  + \sum_{i=1}^{d}  \int_{0}^{t} \int
 u(s,x) \frac{\partial}{\partial
y_{i}}[ \varphi(x-B_{s})] dx \circ dB_{s}^{i}.
\]

\noindent We observe that $\frac{\partial}{\partial y_{i}}[
\varphi(x-B_{s})]=-\frac{\partial}{\partial
x_{i}}\varphi(x-B_{s})$. Thus $V(t,x)=u(t,x+ {B_{t}})$ verifies

\[
\int V(t,x) \varphi(x) dx = \int_{0}^{t} \int b(s,x+B_{s}) \nabla
\varphi(x) V(s,x)dx ds
\]

\[
+ \int_{0}^{t} \int div \ b(s,x+B_{s}) \varphi(x) V(s,x) \ dx ds .
\]

\noindent Let $\phi_{\varepsilon}$ be a standard mollifier. Since
$b(s,x+B_{s})$ satisfies $\mathbb{P}$ a.s the hypothesis
of our Theorem, then by the Commuting Lemma (see Lemma II.1 of
\cite{DL}), $V_{\varepsilon}(t,x)=V(t,.)\ast \phi_{\varepsilon}$
verifies
\[
\lim_{\varepsilon \rightarrow
0}\frac{dV_{\varepsilon}}{dt}+b(t,x+B_{t})\nabla V_{\varepsilon}=0
~ ~~\mathbb{P} \ a.s \mbox{ in }\ L^{1}([0,T],
L^{1}_{loc}(\mathbb{R}^{d})).
\]

\noindent We deduce that if $\beta\in
C^{1}(\mathbb{R})$ and $\beta^{\prime}$ is bounded, then

\begin{equation}\label{norma}
\frac{d\beta(V)}{dt}+b(t,x+B_{t}) \nabla \beta(V)=0.
\end{equation}

\noindent Now, following the same steps in the proof of Theorem II.
2 of \cite{DL}, we define for each $M \in (0,\infty)$ the function
$\beta_M(t)=(|t| \wedge M )^p$ and obtain that
\[
\frac{d}{dt}\int \beta_M(V(t,x))dx \leq C \int \beta_M(V(t,x))dx.
\]
Taking expectation we have that
\[
\frac{d}{dt}\int \mathbb{E}(\beta_M(V(t,x)))dx \leq C \int
\mathbb{E}(\beta_M(V(t,x)))dx.
\]
From Gronwall Lemma we conclude that $\beta_M(V(t,x))=0$. Thus
 $u=0$.
\end{proof}

\begin{remark}
We observe that the unique solution $u(t,x)$ has the
representation $u(t,x)=v(t, x-B(t))$, where $v$ satisfies
(\ref{Auxilia2}). From Corollary 2.2 of \cite{DL}, we follow that
$v$ belongs to $C([0,T], L^p(\mathbb{R}^d))$. Thus $u \in C([0,T],
L^p(\mathbb{R}^d))$.
\end{remark}

\section{ Wong-Zakai principle}

\noindent The Wong-Zakai principle says that the solutions to
equations where the noise is approximated by more regular
processes converge to the solution of the stochastic differential
equation with Stratonovich integrals. We mention that there exist
several works about of the  Wong-Zakai principle for SPDE (see for instance \cite{BRFL}, \cite{GYS}  and
references). Our method for prove this principle is based on the
stability properties for the renormalized solutions of the
deterministic transport equation.

\noindent Now, we considerer approximations of the Brownian
motion, by continuous and bounded variation processes $B_n (t)$
such that
\[
\lim_{n \rightarrow \infty}B_n(t) = \ B(t) \ \ \mathbb{P}-a.s. \
\mbox{ uniformly in }  t.
 \]
We considerer the next equations

\begin{equation}\label{trasportap}
 \left \{
\begin{array}{lll}
    \frac{d}{dt}u_{n}(t, x) + b(t, x) \nabla u_{n}(t, x) + \nabla u_{n}(t,x) \frac{d B_{n}}{dt}(t)=0,\\
 u_{n}(0, x) = u_{0}(x)\in L^{p}(\mathbb{R}^{d}).
\end{array}
\right .
\end{equation}

\begin{definition}\label{defisoluap}
 A    weak  $L^{p}-$solution of the Cauchy problem (\ref{trasportap}) is a stochastic process
    $u_{n}\in L�^{\infty}( �\Omega\times[0, T],�L^{p}(\mathbb{R}^{d}))$
such that, for every test function
  $\varphi \in C_{0}^{\infty}(\mathbb{R}^{d})$, the process $\int u_{n}(t,
  x)\varphi(x)
  dx$ has a continuous modification  and satisfies

\[
\int u_{n}(t,x) \varphi(x) dx= \int u_{0}(x) \varphi(x) dx
\]
\[
+ \int_{0}^{t} \int b(s,x) \nabla \varphi(x) u_{n}(s,x) dx ds +
\int_{0}^{t} \int div \ b(s,x) \varphi(x) u_{n}(s,x) dx ds \
\]
\[
+\sum_{i=0}^{d} \int_{0}^{t} \int  D_{i}\varphi(x) u_{n}(s,x) dx
dB_n(s).
\]
\end{definition}

\begin{lemma}\label{lemaexis} Let $u_{0}\in
L^{p}(\mathbb{R}^{d})$ where  $p\in[1,\infty)$. Suppose that $div
\ b\in L^{1}([0,T], L^{\infty}(\mathbb{R}^{d}))$ and (\ref{con1})
and (\ref{con2}) holds. Then there exits a weak $L^{p}-$solution
$u_{n}$ of the SPDE (\ref{trasportap}).
\end{lemma}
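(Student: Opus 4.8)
The plan is to repeat the existence argument of Lemma~2.1, replacing the It\^o--Ventzel--Kunita formula by the ordinary chain rule. Since every path $t\mapsto B_n(t)$ is continuous and of bounded variation, the integrals $\int_0^t(\cdots)\,dB_n(s)$ in Definition~\ref{defisoluap} are ordinary Riemann--Stieltjes integrals (there is no Stratonovich correction), so the whole construction can be carried out pathwise in $\omega$, the hypotheses (\ref{con1}), (\ref{con2}) and $div\ b\in L^1([0,T],L^\infty(\mathbb{R}^d))$ being deterministic.

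First I would introduce, for each fixed $\omega$, the auxiliary deterministic transport equation $v_t+b(t,x+B_n(t))\,\nabla v(t,x)=0$, $v(0,\cdot)=u_0$, exactly as in the first step of the proof of Lemma~2.1 but with $B_n(t)$ in place of $B_t$. Writing $\tilde b_n(t,x):=b(t,x+B_n(t))$, one checks that $\tilde b_n$ inherits the DiPerna--Lions hypotheses from $b$: for a compact $K\subset\mathbb{R}^d$ the set $K':=\bigcup_{t\in[0,T]}(K+B_n(t))$ is compact by continuity of $B_n$ on $[0,T]$, so $\|\tilde b_n(t,\cdot)\|_{L^q(K)}=\|b(t,\cdot)\|_{L^q(K+B_n(t))}\le\|b(t,\cdot)\|_{L^q(K')}$ is integrable in $t$ by (\ref{con2}); and the ($x$-)divergence $div\ \tilde b_n(t,\cdot)=(div\ b)(t,\cdot+B_n(t))$ has the same $L^\infty$-norm as $div\ b(t,\cdot)$, hence lies in $L^1([0,T],L^\infty(\mathbb{R}^d))$. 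Thus the same (easy modification of) \cite{DL}, Proposition~II.1 used in Lemma~2.1 produces $v\in L^\infty([0,T]\times\Omega,L^p(\mathbb{R}^d))$ solving the weak form (\ref{Auxilia2}) with $B_s$ replaced by $B_n(s)$, measurability in $\omega$ following as there from the measurability of $\omega\mapsto B_n(\cdot,\omega)$ and the stability of the construction with respect to its coefficient.

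Then I would set $u_n(t,x):=v(t,x-B_n(t))$ and verify Definition~\ref{defisoluap}. For $\varphi\in C_0^\infty(\mathbb{R}^d)$, the substitution $y=x-B_n(t)$ gives $\int u_n(t,x)\varphi(x)\,dx=\int v(t,y)\varphi(y+B_n(t))\,dy$. Now $t\mapsto\int v(t,y)\psi(y)\,dy$ is, for each fixed $\psi\in C_0^\infty(\mathbb{R}^d)$, an absolutely continuous function of $t$ whose derivative is read off from the weak equation for $v$, while $t\mapsto\varphi(\cdot+B_n(t))$ is a $C_0^\infty(\mathbb{R}^d)$-valued path of bounded variation; a Lebesgue--Stieltjes product rule therefore produces the identity of Definition~\ref{defisoluap} for $t\mapsto\int v(t,y)\varphi(y+B_n(t))\,dy$, except that every space integrand is evaluated at $y+B_n(s)$ instead of at $x$, and with the additional term $\sum_{i=1}^d\int_0^t\int v(s,y)\,D_i\varphi(y+B_n(s))\,dy\,dB_n^i(s)$ produced by the $B_n$-dependence of the test function (here one uses $\partial_{y_i}\varphi(y+B_n(s))=(D_i\varphi)(y+B_n(s))$). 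Undoing the substitution $x=y+B_n(s)$ in each space integral, so that $v(s,y)=u_n(s,x)$, restores exactly the right-hand side of Definition~\ref{defisoluap}. That right-hand side is visibly continuous in $t$ (a sum of indefinite Lebesgue integrals and of Riemann--Stieltjes integrals of continuous integrands against the continuous process $B_n$), so $\int u_n(t,x)\varphi(x)\,dx$ has a continuous modification, and $u_n\in L^\infty(\Omega\times[0,T],L^p(\mathbb{R}^d))$ because $u_n(t,\cdot)$ and $v(t,\cdot)$ have equal $L^p$-norms; hence $u_n$ is a weak $L^p$-solution of (\ref{trasportap}).

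The step I expect to be the main obstacle is the justification of this Lebesgue--Stieltjes product rule: $v$ is only an $L^\infty_t L^p_x$ weak solution, not classically differentiable in $t$, and $B_n$ is merely continuous of bounded variation. I would handle it by first mollifying $B_n$ in time to smooth paths $B_n^\delta$ with $B_n^\delta\to B_n$ uniformly on $[0,T]$ and with uniformly bounded variation, for which the chain rule is classical once one tests the weak equation for $v$ against the smooth time-dependent function $\varphi(\cdot+B_n^\delta(t))$ (admissible after a routine density argument in the DiPerna--Lions weak formulation), and then letting $\delta\to0$, using the time-continuity $v\in C([0,T],L^p(\mathbb{R}^d))$ of the DiPerna--Lions solution (Corollary~2.2 of \cite{DL}) together with dominated convergence to pass to the limit in the three integral terms and in the Stieltjes term. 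If the approximants $B_n$ are in addition absolutely continuous (for instance piecewise-linear interpolations of $B$), one may bypass the auxiliary equation altogether and apply \cite{DL}, Proposition~II.1 directly to (\ref{trasportap}), since its coefficient $b(t,x)+\dot B_n(t)$ still satisfies the required hypotheses: the perturbation $\dot B_n(t)$ does not depend on $x$, so the divergence is unchanged.
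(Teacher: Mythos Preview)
Your proposal is correct and follows essentially the same approach as the paper: introduce the auxiliary transport equation with coefficient $b(t,x+B_n(t))$, solve it via DiPerna--Lions, and set $u_n(t,x)=v_n(t,x-B_n(t))$. The paper's proof is terser---it simply says to repeat the argument of the earlier existence Lemma---while you supply the details, correctly replacing the It\^o--Ventzel--Kunita step by a Lebesgue--Stieltjes chain rule appropriate to the bounded-variation paths $B_n$, and you also add the useful shortcut of applying \cite{DL} directly to $b+\dot B_n$ when $B_n$ is absolutely continuous.
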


\begin{proof}  We considerer  the following auxiliary transport equation

\begin{equation}\label{Auxilia1ap}
 \left \{
\begin{array}{lll}
\frac{dv_{n}}{dt} & = &  b(t,x+B_{n}(t))  \nabla v_{n}(t,x)  \\
v_n(0,x) & = & u_{0}(x).
\end{array}
\right.
\end{equation}

\noindent According to a small modification of \cite{DL}, Proposition II.1
 there is  a solution $v_{n}(t,x)$  of the  equation (\ref{Auxilia1ap}) in the sense that

\[
\int v_{n}(t,x) \varphi(x) dx
\]
is equal to
\begin{equation}\label{vap}
\int u_{0}(x) \varphi(x) dx+ \int_{0}^{t} \int b(s,x+ B_n(s))
\nabla \varphi(x) v_{n}(s,x) dx ds
\end{equation}
\[
+ \int_{0}^{t} \int div \ b(s,x+ B_n(s)) \varphi(x) v_{n}(s,x) dx
ds
\]
 \noindent Following the proof of the Lemma \ref{lemaexis} we get that

\begin{equation}
u_{n}(t,x)=v_{n}(t,x-B_n(t))
\end{equation}

\noindent is a weak  $L^{p}(\mathbb{R}^{d})-$ solution of the SPDE
(\ref{trasportap}).

\end{proof}

The uniqueness of the approximate problem (\ref{trasportap}),
follows changing $B$ by $B_n$ in the proof of the Theorem
\ref{uni}.

\begin{theorem} Let $p \in [1,\infty)$ and $\frac{1}{p}+\frac{1}{q}=1$. Assume that $  div \ b\in
L^{1}([0,T], L^{\infty}(\mathbb{R}^{d}))$, $ b \in L^{1}([0,T],
(W_{loc}^{1,q}(\mathbb{R}^{d}))^d)$ and $\frac{|b|}{1+|x|}\in
L^{1}([0,T],L^{1}(\mathbb{R}^{d}))+L^{1}([0,T],L^{\infty}(\mathbb{R}^{d}))$.
Then, for every  $u_{0}\in L^{p}(\mathbb{R}^{d})$ there exists a
unique  weak $L^{p}-$solution of the Cauchy problem
(\ref{trasportap})
\end{theorem}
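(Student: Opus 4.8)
The plan is to mirror the structure already used for the stochastic equation (\ref{trasport}): existence comes directly from Lemma \ref{lemaexis} (the second one, for the approximate problem), so the only real content is uniqueness. By linearity it suffices to show that a weak $L^p$-solution $u_n$ of (\ref{trasportap}) with $u_0=0$ vanishes identically. First I would apply the It\^o--Ventzel--Kunita formula (Theorem 8.3 of \cite{Ku2}) to $F(y)=\int u_n(t,x)\varphi(x-y)\,dx$ evaluated along the bounded-variation path $y=B_n(t)$; since $B_n$ has finite variation, the ``$\circ\, dB_n$'' terms are just ordinary Stieltjes integrals and there is no It\^o correction, so the formula reduces to the classical chain rule. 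As in the proof of Theorem \ref{uni}, using $\frac{\partial}{\partial y_i}[\varphi(x-B_n(t))]=-\frac{\partial}{\partial x_i}\varphi(x-B_n(t))$, the first-order-in-$B_n$ terms cancel and one finds that $V_n(t,x):=u_n(t,x+B_n(t))$ is, for $\mathbb{P}$-a.e. $\omega$, a weak solution of the deterministic transport equation
\[
\frac{d}{dt}\int V_n(t,x)\varphi(x)\,dx=\int b(t,x+B_n(t))\nabla\varphi(x)\,V_n(t,x)\,dx+\int \mathrm{div}\,b(t,x+B_n(t))\,\varphi(x)\,V_n(t,x)\,dx.
\]

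Next I would invoke the DiPerna--Lions machinery pathwise. The shifted coefficient $b_n^\omega(t,x):=b(t,x+B_n(t,\omega))$ satisfies, for $\mathbb{P}$-a.e. $\omega$, the same hypotheses as $b$ in Theorem \ref{uni}: $b_n^\omega\in L^1([0,T],(W^{1,q}_{loc})^d)$ and $\mathrm{div}\,b_n^\omega\in L^1([0,T],L^\infty)$ are translation-invariant conditions, and the growth condition $\frac{|b_n^\omega|}{1+|x|}\in L^1_tL^1_x+L^1_tL^\infty_x$ survives a deterministic (locally bounded in $t$) translation because $1+|x+B_n(t)|\le (1+|B_n(t)|)(1+|x|)$ and $B_n$ is bounded on $[0,T]$ $\mathbb{P}$-a.s. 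Then, exactly as in Theorem \ref{uni}, the Commuting Lemma (Lemma II.1 of \cite{DL}) gives $\frac{d}{dt}(V_n)_\varepsilon+b_n^\omega\nabla (V_n)_\varepsilon\to 0$ in $L^1_tL^1_{loc}$, hence $V_n$ is renormalized: $\frac{d}{dt}\beta(V_n)+b_n^\omega\nabla\beta(V_n)=0$ for $\beta\in C^1(\mathbb{R})$ with bounded derivative. Choosing $\beta_M(t)=(|t|\wedge M)^p$ and using $\mathrm{div}\,b_n^\omega\in L^1_tL^\infty_x$ yields $\frac{d}{dt}\int\beta_M(V_n(t,x))\,dx\le C(t)\int\beta_M(V_n(t,x))\,dx$ with $\int C(t)\,dt<\infty$; Gronwall and $V_n(0,\cdot)=0$ force $\beta_M(V_n)\equiv 0$ for every $M$, hence $V_n\equiv 0$ and therefore $u_n\equiv 0$.

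The step I expect to require the most care is the justification that the It\^o--Ventzel--Kunita / chain-rule computation is legitimate at the level of regularity available — namely $u_n\in L^\infty(\Omega\times[0,T],L^p)$ tested only against $C_0^\infty$ functions — and that the manipulations can be carried out $\omega$-by-$\omega$ with the exceptional null set chosen uniformly in the test functions and in $\varepsilon$. Since $B_n$ has bounded variation, this is genuinely softer than in Theorem \ref{uni} (no stochastic integral, no quadratic variation), so one can either quote Theorem 8.3 of \cite{Ku2} as before or simply note that for fixed $\omega$ the equation (\ref{trasportap}) is an ordinary (time-dependent) transport equation and the change of variables $x\mapsto x+B_n(t)$ is the classical method of characteristics for the ``drift'' $\frac{dB_n}{dt}$. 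The remaining verifications — stability of the DiPerna--Lions hypotheses under the translation, and the measurability/integrability needed to pass to expectations if one prefers that route — are routine given the results already established in Sections 2 and 3.
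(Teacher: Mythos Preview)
Your proposal is correct and follows essentially the same route as the paper, which simply remarks that uniqueness for (\ref{trasportap}) is obtained by replacing $B$ with $B_n$ in the proof of Theorem \ref{uni}. Your additional observations --- that the bounded-variation nature of $B_n$ reduces the It\^o--Ventzel--Kunita step to an ordinary chain rule, and that the DiPerna--Lions hypotheses survive the deterministic translation by $B_n(t,\omega)$ --- are sound refinements but do not change the underlying strategy.
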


\noindent Finally, we prove our Wong-Zakai principle.

\begin{theorem} Let $p \in [1,\infty)$ and $\frac{1}{p}+\frac{1}{q}=1$. Assume that $  div \ b\in L^{1}([0,T], L^{\infty}(\mathbb{R}^{d}))$,
 $ b \in L^{1}([0,T], (W_{loc}^{1,q}(\mathbb{R}^{d}))^d)$ and $\frac{|b|}{1+|x|}\in
L^{1}([0,T],L^{1}(\mathbb{R}^{d}))+L^{1}([0,T],L^{\infty}(\mathbb{R}^{d}))$.  Let $u$ and $u_{n}$ are weak
  $L^{p}-$solutions of the SPDE (\ref{trasport}) and (\ref{trasportap}) respectively. Then $u_{n}$
converges to $u$, \ $\mathbb{P}$ \ a.s \ in   $C( [0,
T],L^{p}(\mathbb{R}^{d}))$.
\end{theorem}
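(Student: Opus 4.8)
The strategy is to reduce the assertion to a stability property of the auxiliary \emph{deterministic} transport equation, using the representation formulas from Section~3. Recall that the unique weak $L^{p}$-solution of (\ref{trasport}) is $u(t,x)=v(t,x-B_{t})$, where $v\in C([0,T],L^{p}(\mathbb{R}^{d}))$ is the renormalized solution of (\ref{Auxilia1}) with drift $b(t,x+B_{t})$, and likewise $u_{n}(t,x)=v_{n}(t,x-B_{n}(t))$, where $v_{n}\in C([0,T],L^{p}(\mathbb{R}^{d}))$ is the renormalized solution of (\ref{Auxilia1ap}) with drift $b(t,x+B_{n}(t))$. It therefore suffices to prove, for $\mathbb{P}$-almost every $\omega$, that $v_{n}\to v$ in $C([0,T],L^{p}(\mathbb{R}^{d}))$, and then to transfer this convergence through the random shifts $x\mapsto x-B_{n}(t)$ and $x\mapsto x-B_{t}$.

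Fix $\omega$ in the full-measure event on which $B_{n}(\cdot)\to B_{\cdot}$ uniformly on $[0,T]$; along this sequence $R:=\sup_{n}\sup_{t\leq T}|B_{n}(t)|<\infty$. Put $b_{n}(t,x):=b(t,x+B_{n}(t))$ and $\tilde b(t,x):=b(t,x+B_{t})$. Since translation by a vector of length $\leq R$ is an isometry of $W^{1,q}(\mathbb{R}^{d})$ and of $L^{\infty}(\mathbb{R}^{d})$ and carries a ball $B(0,\rho)$ into $B(0,\rho+R)$, the drifts $b_{n}$ satisfy the hypotheses of Theorem~\ref{uni} uniformly in $n$: $\mathrm{div}\,b_{n}$ is bounded in $L^{1}([0,T],L^{\infty})$, $b_{n}$ is bounded in $L^{1}([0,T],(W^{1,q}_{loc})^{d})$, and $|b_{n}|/(1+|x|)$ is bounded in $L^{1}(L^{1})+L^{1}(L^{\infty})$. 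Moreover $b_{n}\to\tilde b$ in $L^{1}([0,T],(L^{1}_{loc}(\mathbb{R}^{d}))^{d})$: for each compact $K$ and a.e.\ $t$, continuity of translation on $L^{1}(K)$ together with $B_{n}(t)\to B_{t}$ gives $\|b_{n}(t,\cdot)-\tilde b(t,\cdot)\|_{L^{1}(K)}\to 0$, while this quantity is dominated by $2\|b(t,\cdot)\|_{L^{1}(K+B(0,R))}\in L^{1}([0,T])$, so dominated convergence applies. With the renormalization property available (from the $W^{1,q}_{loc}$ bound) and the initial datum held fixed, the stability theorem for renormalized solutions of the transport equation (see \cite{DL}) yields $v_{n}\to v$ in $C([0,T],L^{p}(\mathbb{R}^{d}))$; it is precisely the growth condition on $b$ that rules out escape of $L^{p}$-mass to infinity and upgrades local to global convergence.

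It remains to transfer this to $u_{n}$. For each $t\leq T$,
\[
\|u_{n}(t,\cdot)-u(t,\cdot)\|_{L^{p}}\leq\|v_{n}(t,\cdot-B_{n}(t))-v(t,\cdot-B_{n}(t))\|_{L^{p}}
\]
\[
+\,\|v(t,\cdot-B_{n}(t))-v(t,\cdot-B_{t})\|_{L^{p}}.
\]
By translation invariance of the $L^{p}$-norm the first term equals $\|v_{n}(t,\cdot)-v(t,\cdot)\|_{L^{p}}\leq\sup_{s\leq T}\|v_{n}(s,\cdot)-v(s,\cdot)\|_{L^{p}}\to 0$ by the previous step. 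For the second term, $K:=\{v(s,\cdot):s\in[0,T]\}$ is compact in $L^{p}(\mathbb{R}^{d})$ since $v\in C([0,T],L^{p})$; rewriting the difference after a change of variables as $\|v(t,\cdot-(B_{n}(t)-B_{t}))-v(t,\cdot)\|_{L^{p}}$ and invoking the uniform continuity of the translation action on the compact set $K$ together with $\sup_{t\leq T}|B_{n}(t)-B_{t}|\to 0$, the second term tends to $0$ uniformly in $t$. Hence $\sup_{t\leq T}\|u_{n}(t,\cdot)-u(t,\cdot)\|_{L^{p}}\to 0$ on this event, i.e.\ $u_{n}\to u$ in $C([0,T],L^{p}(\mathbb{R}^{d}))$, $\mathbb{P}$-a.s.

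The delicate point is the middle step: one must ensure the full DiPerna--Lions apparatus applies \emph{uniformly in} $n$ to the time-dependent, randomly shifted drifts $b_{n}$, in particular that the strong $L^{1}_{loc}$-convergence $b_{n}\to\tilde b$ holds and that the divergence and growth bounds are uniform, since these are exactly the ingredients that turn the automatic weak compactness of $(v_{n})$ into strong convergence in $C([0,T],L^{p}(\mathbb{R}^{d}))$. By contrast, the composition step is soft, relying only on continuity of translation on $L^{p}$ and the uniform convergence $B_{n}\to B$.
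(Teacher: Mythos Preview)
Your proof is correct and follows the same route as the paper: both arguments use the representations $u(t,x)=v(t,x-B_t)$ and $u_n(t,x)=v_n(t,x-B_n(t))$, invoke the DiPerna--Lions stability theorem (Theorem~II.4 of \cite{DL}) for the auxiliary deterministic equations to get $v_n\to v$ in $C([0,T],L^p)$, and then pass through the random shifts. The paper simply asserts the last two steps (``By Theorem 2.4 of \cite{DL} \ldots\ From this fact we obtain immediately \ldots''), whereas you carefully verify that the shifted drifts $b_n$ satisfy the hypotheses of the stability theorem uniformly in $n$ and that $b_n\to\tilde b$ in $L^1(L^1_{loc})$, and you spell out the translation argument yielding $u_n\to u$; these details are exactly what is needed to justify the paper's ``immediately''.
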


\begin{proof} We know that $u(t, x)=v(t,x-B(t))$ and $u_{n}(t, x)=v_{n}(t,x-B_n(t))$
where $v(t,x)$ and $v_n(t,x)$ satisfies (\ref{Auxilia2}) and
(\ref{vap}) respectively. By Theorem 2.4 of \cite{DL} we have that

\[
\lim_{n \rightarrow \infty}v_{n}(t,x) = v(t,x) \ \ \mathbb{P} \
a.s \mbox{ in } C( [0, T],L^{p}(\mathbb{R}^{d})).
\]
\noindent From this fact we obtain immediately that

\[
\lim_{n \rightarrow \infty}u_{n}(t,x) = u(t,x) \ \ \mathbb{P} \
a.s \mbox{ in } C( [0, T],L^{p}(\mathbb{R}^{d})).
\]

\end{proof}

\noindent {\it Acknowledgements}
C.Olivera  would like to thank the hospitality of the Universidad Estadual de Campinas(UNICAMP), where
this work was initiated, and of Instituto Nacional de Matem´atica Pura e Aplicada(IMPA), where this work was finished.
C. Olivera thanks to  FAPESP  for the financial support  during your stay in UNICAMP.


\begin{thebibliography}{9999}


\bibitem{albe} S. Albeverio, Z. Haba, F. Russo,  {\it A two-space
dimensional semilinear heat equation perturbed by (Gaussian) white
noise}, Probab. Theory Related Fields 121, 319-366, 2001.


\bibitem{ambrisio}
L.  Ambrosio, {\it Transport equation and
Cauchy problem for $BV$ vector fields}, Invent. Math.,  158 ,
2, 227-260, 2004.



\bibitem{AF}
L. Ambrosio, A. Figalli, {\it On flows associated to Sobolev vector fields in Wiener
space: an approach a la Di Perna-Lions},  J. Funct. Anal., 256, 1, 179-214, 2009.



\bibitem{BRFL}
Z. Brzezniak, F. Flandoli, {\it Almost sure approximation of Wong-Zakai type for stochastic
partial differential equations},  Stoch. Process. Appl., 55, 329-358, 1995.



\bibitem{CO1}
P. Catuogno,  C. Olivera, {\it Tempered Generalized Functions and
Hermite Expansions}, Nonlinear Analysis, 74,  479-493, 2011.



\bibitem{CO2}
P. Catuogno,  C. Olivera, {\it On Stochastic generalized
functions}, to appear  in Infinite Dimensional Analysis, Quantum
Probability and Related Topics.

\bibitem{CruCi}
F. Cipriano, A.  Cruzeiro, {\it Flows associated with irregular $R^{d}$-vector fields},  J.
Diff. Equations,  2, 10, 183-201, 2005.

\bibitem{CoveiloRuso}
R. Coviello, F.  Russo,  {\it Stochastic differential equations and weak Dirichlet processes},   Ann. Probab.,  35,  1,
255-308, 2007.

\bibitem{DL}
R. DiPerna,  P. L. Lions,  {\it Ordinary differential
equations, transport theory and Sobolev spaces}, Invent. Math.,  98,
511-547, 1989.

\bibitem{FangLuo}
S. Fang, D. Luo, {\it Transport equations and quasi-invariant flows on the Wiener
space},  Bull. Sci. Math., 134, 295-328, 2010.

\bibitem{Figa}
A. Figalli, {\it Existence and uniqueness of martingale solutions for SDEs with rough or
degenerate coefficients}, J. Funct. Anal.,  254, 109-153, 2008.

\bibitem{FGP2}
 F. Flandoli, M. Gubinelli, E. Priola, {\it Well-posedness of the transport equation by stochastic
 perturbation}, Invent. Math., 180, 1,  1-53, 2010.

\bibitem{FLRU}
F. Flandoli, F. Russo,  {\it Generalized integration and stochastic ODEs},   Ann. Probab.,   30, 1, 270-292, 2002.



\bibitem{GYS}
I. Gy\"{o}ngy, A. Shmatkov {\it Rate of convergence of Wong-Zakai approximations for stochastic
partial differential equations},  Appl. Math. Optim., 54, 315-341, 2006.


\bibitem{Ku2}
H. Kunita,  {\it  Stochastic differential equations and stochastic
flows of diffeomorphisms}, Lectures Notes in Mathematics,
Springer-Verlag, Berlin, 1097, 143-303, 1982.



\bibitem{Ku}
 H. Kunita  {\it Stochastic flows and stochastic differential
equations}. Cambridge University Press, 1990.






\bibitem{BrisLion}
C. LeBris , P. L. Lions, {\it Existence and uniqueness of solutions
to Fokker-Planck type equations with irregular coefficients}, Comm.
Partial Diff. Equations,  33, 1272-1317, 2008.



\bibitem{OconePardoux}

D. Ocone, A. Pardoux, {\it A generalized It\^o-Ventzell formula. Application to a class of anticipating stochastic differential equations},   Ann. Inst.
 H. Poincare Probab. Statist., 25, 1, 39-71, 1989.


\bibitem{Russo}
F. Russo,  {\it Colombeau generalized functions and stochastic
analysis}, Edit. A.l. Cardoso, M. de  Faria, J Potthoff, R. Seneor,
L. Streit, Stochastic analysis and applications in physics , NATO
Adv. Sci. Inst. Ser. C Math. Phys. Sci., Kluwer Acad. Publ.,
Dordrecht, 449 329-249, 1994.

\bibitem{Zanng}
X. Zhang, {\it Stochastic flows of SDEs with irregular coefficients and stochastic
transport equations}, Bull. Sci. Math.,
134, 4,  340-378, 2010.







\end{thebibliography}
\end{document}